\documentclass{amsart}
\usepackage{amssymb,amsmath,latexsym}
\usepackage{amsthm}
\usepackage{fontenc}
\usepackage{mathrsfs}
\usepackage{amssymb}

\numberwithin{equation}{section}

\newtheorem{theorem}{Theorem}[section]


\setlength{\parindent}{0in}
\begin{document}
\author{Alexander E. Patkowski}
\title{On Berndt's summation formula}

\maketitle
\begin{abstract} We offer a proof of a summation formula equivalent to one due to Berndt. Our proof uses the M$\ddot{u}$ntz formula and the Poisson summation formula. By utilizing known properties of Mellin inversion, we give an example from a discontinuous function. Several new applications are offered as corollaries. \end{abstract}

\keywords{\it Keywords: \rm Poisson summation; Riemann zeta function; Fourier series}

\subjclass{ \it 2010 Mathematics Subject Classification 11L20, 11M06.}

\section{Introduction and Main Summation formulas} 
In [2], Berndt offered an intriguing summation formula involving arithmetic functions. Let $a(n)$ be an arithmetic function in the sense that its domain is $\mathbb{N}$ and range is $\mathbb{C},$ and $b(n):=\sum_{d|n}a(d)\mu(n/d).$ As usual $\mu(n)$ is the M$\ddot{o}$bius function. We also define $a(n)=a(-n)$ for each positive integer $n.$ Define $a(n,S):=\sum_{\substack{d|n \\ d\in S}}b(d),$ if $S\subset\mathbb{N}.$ Suppose that $f(x)$ is continuous on the real line $\mathbb{R}$ and $f(x)\in L_1(\mathbb{R}).$ Then, assuming $$\sum_{\substack{n\in\mathbb{Z}\\n\neq0}}a(n)f(n)$$ converges absolutely, Berndt's formula is stated as [2, pg.295--296, eq.(3)]
\begin{equation}\sum_{\substack{n\in\mathbb{Z}\\n\neq0}}a(n,S)f(n)=\sum_{\substack{k\ge1 \\ k\in S}}\frac{b(k)}{k}\left(\sum_{m\in\mathbb{Z}}\int_{\mathbb{R}}e^{2\pi imx/k}f(x)dx-f(0)k \right). \end{equation}
Berndt't proof of this formula involves constructing a special function from arithmetic functions and applying the "ordinary" Poisson summation formula. We were able to find a similar summation formula using some known results on Mellin transforms. Recall that the Mellin transform of a function $f(x)$ is given as 
\begin{equation}\mathfrak{M}(f)(s)=\int_{0}^{\infty}x^{s-1}f(x)dx,\end{equation}
for $a<\Re(s)<b$ provided $f(x)$ satisfies the growth condition $O(x^{-a-\epsilon}),$ as $x\rightarrow0^{+}$ and $O(x^{-b+\epsilon}),$ as $x\rightarrow\infty$ [12, pg.80]. Additionally, the inverse formula is given as
\begin{equation}\mathfrak{M}^{-1}\left(\mathfrak{M}(f)(s)\right)(x)=f(x)=\frac{1}{2\pi i}\int_{c-i\infty}^{c+i\infty}x^{-s}\mathfrak{M}(f)(s)ds,\end{equation}
provided $\Re(s)=c$ is where $\mathfrak{M}(f)(s)$ is taken to be analytic.  Let $\mathscr{F}(f)(w)=\int_{0}^{\infty}\cos(w2\pi x)f(x)dx$ denote the Fourier cosine transform of $f.$ The summation formula we offer is constructed from the Poisson summation formula for Fourier cosine transforms [3, pg.233, eq.(31.2), $a=0,$ $b=\infty$]
$$\frac{f(0)}{2}+\sum_{n\ge1}f(n)=\int_{0}^{\infty}f(x)dx+2\sum_{n\ge1}\mathscr{F}(f)(n).$$
\begin{theorem} Assume $f$ satisfies the hypothesis of (1.1) and growth conditions for (1.2). Let $\zeta(s)$ be the Riemann zeta function. Then, assuming absolute convergence,
$$\sum_{n\ge1}a(n)\mathscr{F}(f)(n)=\frac{1}{2}\sum_{m\ge1}\frac{b(m)}{m}\left(\mathfrak{M}^{-1}\left(\zeta(s) \mathfrak{M}(f)(s)\right)(1/m)+\frac{f(0)}{2}\right),$$
where $\mathfrak{M}^{-1}$ is taken over the vertical line $0<\Re(s)=c<1.$
\end{theorem}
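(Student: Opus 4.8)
The plan is to reduce the identity to the M$\ddot{u}$ntz formula, with M$\ddot{o}$bius inversion and the quoted Poisson summation formula for cosine transforms as the only other ingredients. Since $b(n)=\sum_{d\mid n}a(d)\mu(n/d)$, M$\ddot{o}$bius inversion gives $a(n)=\sum_{d\mid n}b(d)$. Substituting this into the left-hand side and writing $n=dm$, I would rearrange the absolutely convergent double sum as
$$\sum_{n\ge1}a(n)\mathscr{F}(f)(n)=\sum_{d\ge1}b(d)\sum_{m\ge1}\mathscr{F}(f)(dm).$$
A change of variables inside the cosine transform completes the setup: with $g_d(x):=f(x/d)$ one has $\mathscr{F}(f)(dm)=\tfrac1d\,\mathscr{F}(g_d)(m)$, so the inner sum is $\tfrac1d\sum_{m\ge1}\mathscr{F}(g_d)(m)$.

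Next I would apply the Poisson summation formula for Fourier cosine transforms quoted above to the dilate $g_d$, which inherits continuity and $L_1$-integrability from $f$, obtaining
$$\sum_{m\ge1}\mathscr{F}(g_d)(m)=\frac12\left(\frac{g_d(0)}{2}+\sum_{m\ge1}g_d(m)-\int_0^\infty g_d(x)\,dx\right).$$
Here $g_d(0)=f(0)$, $\int_0^\infty g_d(x)\,dx=d\int_0^\infty f(x)\,dx$, and $\sum_{m\ge1}g_d(m)=\sum_{m\ge1}f(m/d)$, and the key point is that this last sum is exactly what the M$\ddot{u}$ntz formula evaluates: for $0<\Re(s)=c<1$,
$$\sum_{m\ge1}f(m/d)=d\int_0^\infty f(x)\,dx+\mathfrak{M}^{-1}\!\left(\zeta(s)\mathfrak{M}(f)(s)\right)(1/d),$$
the term $d\int_0^\infty f$ being the residue of $\zeta(s)\mathfrak{M}(f)(s)x^{-s}$ at the simple pole $s=1$ picked up when the Mellin inversion contour is shifted from $\Re(s)>1$ into the critical strip. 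That term cancels the integral in the Poisson identity, leaving $\sum_{m\ge1}\mathscr{F}(g_d)(m)=\tfrac12\bigl(\tfrac12 f(0)+\mathfrak{M}^{-1}(\zeta(s)\mathfrak{M}(f)(s))(1/d)\bigr)$; multiplying by $b(d)/d$, summing over $d\ge1$, and relabelling $d$ as $m$ produces the asserted formula.

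I expect the main obstacle to be bookkeeping of convergence rather than any new idea. One has to justify interchanging the sums over $d$ and $m$, the termwise application of Poisson summation to each $g_d$, and --- the subtlest point --- the fact that the divergent-looking quantities $d\int_0^\infty f$ must be cancelled inside each $d$-summand before the sum over $d$ is carried out, since $\sum_d b(d)\int_0^\infty f$ need not converge. The growth hypothesis $f(x)=O(x^{-m})$ with $m>1$ is precisely what makes the M$\ddot{u}$ntz formula available (analyticity of $\mathfrak{M}(f)$ in the strip, and enough vertical decay to move the contour), and the standing absolute-convergence assumptions handle the remaining rearrangements.
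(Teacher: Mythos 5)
Your proposal is correct and follows essentially the same route as the paper: Möbius inversion to pass from $a$ to $b$, rearrangement of the double sum with the rescaling $\mathscr{F}(f)(dm)=\tfrac1d\mathscr{F}(f(\cdot/d))(m)$, the Poisson summation formula for cosine transforms applied to each dilate, and the Müntz formula to absorb the term $d\int_0^\infty f$ together with $\sum_m f(m/d)$ into $\mathfrak{M}^{-1}(\zeta(s)\mathfrak{M}(f)(s))(1/d)$. The only difference is presentational: the paper performs the change of variable inside the cosine integral and keeps everything in one displayed chain, while you isolate the dilates $g_d$ and comment explicitly on the residue interpretation of the cancelling integral term.
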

\begin{proof} We write
$$\begin{aligned}\sum_{n\ge1}a(n)\mathscr{F}(f)(n)&=\sum_{m\ge1}b(m)\sum_{n\ge1}\int_{0}^{\infty}\cos(2nm\pi x)f(x)dx
\\ &=\sum_{m\ge1}\frac{b(m)}{m}\sum_{n\ge1}\int_{0}^{\infty}\cos(2n\pi x)f(\frac{x}{m})dx\\
&=\frac{1}{2}\sum_{m\ge1}\frac{b(m)}{m}\left(-\int_{0}^{\infty}f\left(\frac{x}{m}\right)dx+\frac{f(0)}{2}+\sum_{n\ge1}f(\frac{n}{m}) \right)\\
&=\frac{1}{2}\sum_{m\ge1}\frac{b(m)}{m}\left(\mathfrak{M}^{-1}\left(\zeta(s) \mathfrak{M}(f)(s)\right)(1/m)+\frac{f(0)}{2}\right).\end{aligned}$$

In the second line we have made the change of variable $x\rightarrow x/m.$ In the third line we invoked the Poisson summation formula for cosine transforms. In the fourth line we have invoked the M$\ddot{u}$ntz formula [14, pg.29, eq.(2.11.1), $x\rightarrow 1/x,$ $v\rightarrow v/x$]
\begin{equation}\mathfrak{M}^{-1}\left(\zeta(s) \mathfrak{M}(f)(s)\right)(1/x)=\frac{1}{2\pi i}\int_{c-i\infty}^{c+i\infty}x^s\zeta(s) \mathfrak{M}(f)(s)ds=-\int_{0}^{\infty}f\left(\frac{y}{x}\right)dy+\sum_{n\ge1}f\left(\frac{n}{x}\right), \end{equation}
where $0<\Re(s)=c<1.$ This formula may be applied, since the first order derivative of $f$ is continuous, $f(x)$ has growth $O(x^{-m}),$ $m>1,$  for large $x$ by hypothesis of the theorem. 
\end{proof}
Theorem 1.1 may be recovered from Berndt's formula (1.1) by choosing the set $S=\mathbb{N},$ and the function $f$ to be the Fourier cosine transform of a function. Since $f$ is then an even function, the result follows after proceeding with our steps involving the M$\ddot{u}$ntz formula. One advantage to our application of Mellin inversion, is we may work with discontinuous functions to obtain formulas valid almost everywhere. An example of this will be provided in the next theorem. As a result, we are able to produce applications from Theorem 1.1 not included in the list provided in [2], which restricted examples to continuous functions. \par H. Davenport [4] offered a curious Fourier series for a series involving the fractional part function over a class of arithmetic functions. Let $[x]$ be the integer part of $x,$ and write $\{x\}=x-[x],$ Note that,
$$[x] := \begin{cases}\lfloor{x\rfloor},& \text {if } x\ge0,\\ \lceil{x\rceil}, & \text{if } x<0.\end{cases}$$
For a well-known proof involving Mellin inversion see [13]. Although Theorem 1.1 assumes continuity in the hypothesis, we may extend our examples to discontinuous functions by exploiting properties of Mellin inversion. It is a known property that Mellin inversion recovers the original function in this case almost everywhere when discontinuities exist (e.g. [12, pg.93]). The identity we wish to prove in the next theorem has also been noted in the theorem in [13, pg.348] to assume absolute  and uniform convergence of the Dirichlet series $\sum_{n\ge1}b(n)n^{-s},$ for $\Re(s)>0.$
\begin{theorem} ([4]) For irrational $x>0,$ and assuming absolute and uniform convergence of the Dirichlet series $\sum_{n\ge1}b(n)n^{-s},$ for $\Re(s)=1+\delta,$ $\delta>0,$
\begin{equation}\sum_{n\ge1}\frac{b(n)}{n}\left(\{nx\}-\frac{1}{2}\right)=-\frac{1}{\pi}\sum_{n\ge1}\frac{a(n)}{n}\sin(2\pi n x).\end{equation}
\end{theorem}
\begin{proof} First, we select $$f(w)=\int_{0}^{\infty}\cos(y2\pi w)\frac{\sin(2\pi xy)}{y}dy$$ in Theorem 1.1 and notice that by [12]
\begin{equation}\int_{0}^{\infty}w^{s-1}\left(\int_{0}^{\infty}\cos(y2\pi w)\frac{\sin(2\pi xy)}{y}dy \right)dw=\Gamma(s)\cos(\frac{\pi}{2}s)\Gamma(-s)\sin(\frac{\pi}{2}s)x^{s}=\frac{\pi x^{s}}{2s},\end{equation}
where we have used the reflection formula for the gamma function, provided $0<\Re(s)<1.$ It is known from Perron's formula [14, pg.14, eq.(2.1.5)] that, for $0<c<1,$ $x>0,$
$$-\frac{1}{2\pi i}\int_{c-i\infty}^{c+i\infty}\zeta(s)\frac{x^{s}}{s}ds=\{x\}.$$
Now using $f(0)=\frac{\pi}{2}$ we have the result after applying Fourier inversion on the left hand side of Theorem 1.1, and noting that $\{x\}$ is continuous only at the irrational numbers.
\end{proof}
Our proof may be conveniently applied to other summation formula's of the Voronoi type. Indeed, let $\sigma(n)$ denote the number of divisors of $n.$ Define $\mathscr{K}(f)(x)=\int_{0}^{\infty}f(y)(4K_0(4\pi\sqrt{xy})-2\pi Y_0(4\pi\sqrt{xy}))dy,$ where $K_0(x)$ and $Y_0(x)$ are the modified Bessel functions. The Voronoi's summation formula for $\sigma(n)$ is known to be [1, pg.139]
$$\frac{f(0)}{2}+\sum_{n\ge1}\sigma(n)f(n)=\int_{0}^{\infty}f(x)(\log(x)+2\gamma)dx+\sum_{n\ge1}\sigma(n)\mathscr{K}(f)(n).$$
Here $\gamma$ is the Euler-Mascheroni constant.
\begin{theorem} Let $c(n)=\sum_{d|n}\sigma(\frac{n}{d})b(d).$ Assume $f$ satisfies the hypothesis of (1.1) and growth conditions for (1.2). Then, assuming absolute convergence,
$$\sum_{n\ge1}c(n)\mathscr{K}(f)(n)=\sum_{m\ge1}\frac{b(m)}{m}\left(\mathfrak{M}^{-1}\left(\zeta^2(s) \mathfrak{M}(f)(s)\right)(1/m)+\frac{f(0)}{2}\right),$$
where $\mathfrak{M}^{-1}$ is taken over the vertical line $0<\Re(s)=c<1.$
\end{theorem}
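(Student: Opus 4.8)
The plan is to rerun the proof of Theorem 1.1, replacing the Poisson summation formula for cosine transforms by the stated Voronoi summation formula for $\sigma(n)$, and the M$\ddot{u}$ntz formula (1.5) by its analogue for $\zeta^{2}(s)$, whose \emph{double} pole at $s=1$ will be responsible for the weight $\log x+2\gamma$ occurring in Voronoi's formula. First I would open the Dirichlet convolution defining $c(n)$: writing $n=dm$ and interchanging the (absolutely convergent) sums,
\[
\sum_{n\ge1}c(n)\mathscr{K}(f)(n)=\sum_{d\ge1}b(d)\sum_{m\ge1}\sigma(m)\,\mathscr{K}(f)(dm).
\]
For fixed $d$, the substitution $y\to y/d$ in the Bessel integral defining $\mathscr{K}$ gives $\mathscr{K}(f)(dm)=\frac1d\,\mathscr{K}(f_{d})(m)$, where $f_{d}(x):=f(x/d)$; this plays the role of the change of variable $x\to x/m$ on the second line of the proof of Theorem 1.1, so that the inner sum becomes $\frac1d\sum_{m\ge1}\sigma(m)\,\mathscr{K}(f_{d})(m)$.

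Next I would apply Voronoi's formula to $f_{d}$. Since $f_{d}(0)=f(0)$, it yields
\[
\sum_{m\ge1}\sigma(m)\,\mathscr{K}(f_{d})(m)=\frac{f(0)}{2}+\sum_{m\ge1}\sigma(m)f\!\left(\frac{m}{d}\right)-\int_{0}^{\infty}f\!\left(\frac{x}{d}\right)(\log x+2\gamma)\,dx,
\]
and the remaining point is to recognize the last two terms as $\mathfrak{M}^{-1}\!\left(\zeta^{2}(s)\mathfrak{M}(f)(s)\right)(1/d)$, i.e.\ to establish the $\zeta^{2}$-analogue of (1.5). Since $\sum_{n\ge1}\sigma(n)n^{-s}=\zeta^{2}(s)$, Mellin inversion on a line $\Re(s)=c'$ with $1<c'<m$ gives $\sum_{n\ge1}\sigma(n)f(n/x)=\frac{1}{2\pi i}\int_{c'-i\infty}^{c'+i\infty}x^{s}\zeta^{2}(s)\mathfrak{M}(f)(s)\,ds$, and moving the contour to $\Re(s)=c\in(0,1)$ crosses only the double pole of $\zeta^{2}$ at $s=1$. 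With $\zeta^{2}(s)=(s-1)^{-2}+2\gamma(s-1)^{-1}+\cdots$, its residue there is $\frac{d}{ds}\bigl(x^{s}\mathfrak{M}(f)(s)\bigr)\big|_{s=1}+2\gamma\,x\,\mathfrak{M}(f)(1)=x\bigl(I_{1}+(\log x+2\gamma)I_{0}\bigr)$, with $I_{0}=\int_{0}^{\infty}f(t)\,dt$ and $I_{1}=\int_{0}^{\infty}(\log t)f(t)\,dt$; after the substitution $t\to y/x$ this is exactly $\int_{0}^{\infty}f(y/x)(\log y+2\gamma)\,dy$. Hence
\[
\frac{1}{2\pi i}\int_{c-i\infty}^{c+i\infty}x^{s}\zeta^{2}(s)\mathfrak{M}(f)(s)\,ds=\sum_{n\ge1}\sigma(n)f\!\left(\frac{n}{x}\right)-\int_{0}^{\infty}f\!\left(\frac{y}{x}\right)(\log y+2\gamma)\,dy,
\]
so that $\sum_{m\ge1}\sigma(m)\mathscr{K}(f_{d})(m)=\frac{f(0)}{2}+\mathfrak{M}^{-1}\!\left(\zeta^{2}(s)\mathfrak{M}(f)(s)\right)(1/d)$. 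Substituting this back and summing $b(d)/d$ over $d\ge1$ gives the statement.

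The main obstacle is this $\zeta^{2}$-version of the M$\ddot{u}$ntz formula: one has to pin down the residue at the double pole so that it reproduces precisely the weight $\log x+2\gamma$ of Voronoi's formula, and one has to justify the horizontal translation of the contour. The latter follows, as in the remark after (1.5), from the decay of $\mathfrak{M}(f)(s)$ along vertical lines — one integration by parts, permitted by the continuity of $f'$ and the growth hypothesis $f(x)=O(x^{-m})$ with $m>1$, gives $\mathfrak{M}(f)(s)=O(|s|^{-1})$ — set against the polynomial growth of $\zeta^{2}(s)$ in the strip $0<\Re(s)<2$, for which the standard convexity bound for $\zeta$ on vertical lines is more than enough. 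Everything else is bookkeeping justified by the absolute convergence assumed in the theorem: the interchange of the $d$- and $m$-summations and the interchange of summation with the Bessel integral defining $\mathscr{K}$.
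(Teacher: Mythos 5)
Your proposal is correct and follows essentially the same route as the paper: split the Dirichlet convolution $c=\sigma*b$, rescale via $\mathscr{K}(f)(dm)=\frac1d\mathscr{K}(f_d)(m)$, apply Voronoi's formula, and identify the resulting boundary terms through the M$\ddot{u}$ntz-type formula for $\zeta^2(s)$. The only difference is that the paper simply cites Lima's Theorem 3.4 for that formula, whereas you derive it by shifting the contour past the double pole at $s=1$; your derivation in fact yields the correct right-hand side $\sum_{n\ge1}\sigma(n)f(n/x)-\int_0^\infty f(y/x)(\log y+2\gamma)\,dy$, with the divisor weight $\sigma(n)$ in the sum, which is missing from the paper's equation (1.7) as printed (evidently a typo, since the theorem requires it).
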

\begin{proof} The proof is identical to the one for Theorem 1.1, but requires a M$\ddot{u}$ntz-type formula for $\sigma(n)$ that recently appeared in [10, pg.404, Theorem 3.4]
\begin{equation}\frac{1}{2\pi i}\int_{c-i\infty}^{c+i\infty}x^s\zeta^2(s) \mathfrak{M}(f)(s)ds=-\int_{0}^{\infty}f\left(\frac{y}{x}\right)(\log(y)+2\gamma)dy+\sum_{n\ge1}\sigma(n)f\left(\frac{n}{x}\right), \end{equation}
where $0<\Re(s)=c<1.$ The result follows after splitting the sum over $c(n)$ into two series, and making a change of variable as before. \end{proof}

\section{Identity involving the Koshlyakov function}
The function
\begin{equation}\mathfrak{K}(x):=2\sum_{n\ge1}\sigma(n)\left(K_0(4\pi e^{i\pi/4}\sqrt{nx})+K_0(4\pi e^{-i\pi/4}\sqrt{nx})\right) \end{equation}
where $\sigma(n)$ denotes the number of divisors of $n$ and $K_0(x)$ is the modified Bessel function of the second kind, has recently appeared in several papers, and is attributed in [5] to Koshlyakov. We give an apparently new formula involving this function by applying Theorem 1.1. 
\begin{theorem} For real $z>0,$ assuming $a(n)$ is chosen so that the series converge absolutely,
$$\sum_{n\ge1}a(n)I(n,z)=\frac{z^2}{2}\sum_{m\ge1}\frac{b(m)}{m}\left(zm\mathfrak{K}(zm)+\frac{1}{2\pi}-\frac{1}{8z^2}\right),$$
where $$I(x,z)=\int_{0}^{\infty}\cos(xw2\pi)w^{-2}\int_{0}^{\infty}e^{-(zy)^2/w^2}y\left(\sum_{n\ge1}e^{-(yn)^2}-\frac{\sqrt{\pi}}{2y}\right)dydw.$$
\end{theorem}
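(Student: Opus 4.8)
The plan is to apply Theorem 1.1 to the function
$$f(w)=w^{-2}\int_{0}^{\infty}e^{-(zy)^2/w^2}\,y\left(\sum_{n\ge1}e^{-(yn)^2}-\frac{\sqrt{\pi}}{2y}\right)dy,$$
for which $\mathscr{F}(f)(n)=I(n,z)$ by construction, so that the left-hand side of Theorem 1.1 is exactly $\sum_{n\ge1}a(n)I(n,z)$. Everything then reduces to evaluating the two ingredients on the right-hand side of Theorem 1.1 in closed form: the constant $f(0)$ and the inverse Mellin transform $\mathfrak{M}^{-1}\!\left(\zeta(s)\mathfrak{M}(f)(s)\right)(1/m)$.

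First I would compute $\mathfrak{M}(f)(s)$. Interchanging the $w$- and $y$-integrations and evaluating the $w$-integral by the substitution $t=(zy)^2/w^2$ gives, for $0<\Re(s)<2$,
$$\mathfrak{M}(f)(s)=\frac{z^{s-2}}{2}\,\Gamma\!\left(1-\tfrac{s}{2}\right)\int_{0}^{\infty}y^{s-1}\left(\sum_{n\ge1}e^{-(yn)^2}-\frac{\sqrt{\pi}}{2y}\right)dy.$$
The remaining $y$-integral is a classical completed theta integral: using the Jacobi transformation $\sum_{n\in\mathbb{Z}}e^{-(yn)^2}=\frac{\sqrt{\pi}}{y}\sum_{n\in\mathbb{Z}}e^{-(\pi n/y)^2}$ to split and fold it in the usual Riemann fashion, one finds it equals $\tfrac12\Gamma(s/2)\zeta(s)$ for $0<\Re(s)<1$. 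Combining this with the reflection formula $\Gamma(s/2)\Gamma(1-s/2)=\pi/\sin(\pi s/2)$ yields
$$\mathfrak{M}(f)(s)=\frac{\pi\,z^{s-2}\,\zeta(s)}{4\sin(\pi s/2)},\qquad 0<\Re(s)<1,$$
while a companion Gaussian estimate as $w\to0$ evaluates $f(0)$ as an elementary expression in $z$.

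Next, $\zeta(s)\mathfrak{M}(f)(s)=\dfrac{\pi z^{s-2}}{4}\cdot\dfrac{\zeta^{2}(s)}{\sin(\pi s/2)}$, so that
$$\mathfrak{M}^{-1}\!\left(\zeta(s)\mathfrak{M}(f)(s)\right)(1/m)=\frac{\pi}{4z^{2}}\cdot\frac{1}{2\pi i}\int_{c-i\infty}^{c+i\infty}(mz)^{s}\,\frac{\zeta^{2}(s)}{\sin(\pi s/2)}\,ds.$$
Here I would invoke the functional equation $\zeta(s)=2^{s}\pi^{s-1}\sin(\pi s/2)\Gamma(1-s)\zeta(1-s)$, which cancels the $\sin(\pi s/2)$ in the denominator, and then substitute $s\mapsto1-s$; the integrand becomes a constant multiple of $(4\pi^{2}mz)^{-s}\Gamma(s)^{2}\cos(\pi s/2)\zeta^{2}(s)$ on a vertical line in the critical strip. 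The point is that this is precisely the Mellin--Barnes kernel of the Koshlyakov function: integrating $\mathfrak{K}$ term by term with $\int_{0}^{\infty}x^{s-1}K_{0}(a\sqrt{x})\,dx=2^{2s-1}a^{-2s}\Gamma(s)^{2}$ and $\sum_{n\ge1}\sigma(n)n^{-s}=\zeta^{2}(s)$ shows $\mathfrak{M}(\mathfrak{K})(s)=2(2\pi)^{-2s}\Gamma(s)^{2}\cos(\pi s/2)\zeta^{2}(s)$ for $\Re(s)>1$. Shifting the contour of the inverse Mellin transform of $\mathfrak{K}$ down into the strip $0<\Re(s)<1$ picks up the residue at the simple pole $s=1$ (the double pole of $\zeta^{2}(s)$ there being tempered to a simple one by the zero of $\cos(\pi s/2)$), which contributes an elementary term proportional to $1/(mz)$. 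Collecting everything, $\mathfrak{M}^{-1}\!\left(\zeta(s)\mathfrak{M}(f)(s)\right)(1/m)$ becomes a constant multiple of $zm\,\mathfrak{K}(zm)$ plus a constant; substituting back into Theorem 1.1 and combining that constant with the $\tfrac{f(0)}{2}$ term produces the additive constant inside the parentheses of the assertion.

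The main obstacle is the identification in the last paragraph: one must pin down the Mellin--Barnes representation of $\mathfrak{K}$ with the correct normalisation and then account precisely for the residue at $s=1$ during the contour shift, since it is this residue, and not any pole inside the critical strip, that generates the additive constant in the statement. A secondary but genuinely delicate point is to justify the interchanges of summation and integration and to check that $f$ and $f'$ meet the smoothness and decay requirements making the cosine Poisson summation formula and the M$\ddot{u}$ntz formula applicable; this requires care, since $f$ decays only like $1/w$ at infinity, so one must exhibit the compensating cancellation between $-\int_{0}^{\infty}f(y/m)\,dy$ and $\sum_{n\ge1}f(n/m)$ that is implicit in the M$\ddot{u}$ntz formula, using the asymptotic analysis of $f$ near $0$ and $\infty$ indicated above.
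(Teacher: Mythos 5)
Your proposal follows essentially the same route as the paper: the same choice of $f$, the same evaluation of $\mathfrak{M}(f)(s)$ as a constant multiple of $z^{s-2}\zeta(s)/\sin(\tfrac{\pi}{2}s)$ via the classical theta integral and the reflection formula, and the same identification of $\mathfrak{M}^{-1}\left(\zeta(s)\mathfrak{M}(f)(s)\right)(1/m)$ with $zm\,\mathfrak{K}(zm)$ plus a constant by shifting a Mellin--Barnes contour across the simple pole at $s=1$ and substituting $s\mapsto 1-s$. The one genuinely different ingredient is that where the paper simply quotes the representation $\mathfrak{K}(x)=\frac{1}{2\pi i}\int_{(c)}\zeta^{2}(1-s)x^{-s}\left(2\cos(\tfrac{\pi}{2}s)\right)^{-1}ds$, $c>1$, from Dixit--Moll and Koshlyakov, you rederive it from the definition (2.1) by termwise Mellin transformation of the Bessel sum, obtaining $\mathfrak{M}(\mathfrak{K})(s)=2(2\pi)^{-2s}\Gamma(s)^{2}\cos(\tfrac{\pi}{2}s)\zeta^{2}(s)$, and then apply the functional equation of $\zeta$; the two expressions agree, so your version makes the argument self-contained at the cost of having to justify the termwise integration and the Mellin inversion for $\mathfrak{K}$, which the citation sidesteps.

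The one substantive loose end in your write-up is $f(0)$. The paper asserts $f(0)=0$ by dominated convergence, so that the additive constant $\tfrac{1}{2\pi}$ in the statement is exactly the residue contribution at $s=1$; you instead leave $f(0)$ as ``an elementary expression in $z$'' to be absorbed into the constant. This cannot be left open: the value of $f(0)$ determines whether the $\tfrac{1}{2\pi}$ survives, is altered, or cancels against part of the residue term, and the limit is genuinely delicate. The natural dominating function $\sup_{w>0}w^{-2}e^{-(zy)^{2}/w^{2}}=e^{-1}(zy)^{-2}$ fails to be integrable against $y\left(\sum_{n\ge1}e^{-(yn)^{2}}-\tfrac{\sqrt{\pi}}{2y}\right)$ near $y=0$, where the bracket tends to $-\tfrac12$; substituting $u=y/w$ and using the theta transformation in fact gives $f(w)\to -\tfrac{1}{4}\,z^{-2}\ne 0$ as $w\to 0^{+}$. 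So you must carry out this computation explicitly and then track all multiplicative constants (your factor $\tfrac14$ in $\mathfrak{M}(f)(s)$, the residue $-\tfrac{1}{4\pi x}$ at $s=1$, and the prefactor in Theorem 1.1) through to the final identity; as it stands your argument establishes the shape of the formula but not the precise constants asserted in the statement.
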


\begin{proof} We choose the function as the absolutely convergent integral $$f(x)=x^{-2}\int_{0}^{\infty}e^{-(zy)^2/x^2}y\left(\sum_{n\ge1}e^{-(yn)^2}-\frac{\sqrt{\pi}}{2y}\right)dy,$$ in Theorem 1.1. The left side of Theorem 2.1 is clear. Note that for $0<\Re(s)<1,$ 

\begin{equation}\begin{aligned}\mathfrak{M}(f)(s)&=\int_{0}^{\infty}x^{s-1}x^{-2} \int_{0}^{\infty}e^{-(zy)^2/x^2}y\left(\sum_{n\ge1}e^{-(yn)^2}-\frac{\sqrt{\pi}}{2y}\right)dy dx\\
&=\Gamma(1-\frac{s}{2})z^{s-2}\int_{0}^{\infty}y^{s-1}\left(\sum_{n\ge1}e^{-(yn)^2}-\frac{\sqrt{\pi}}{2y}\right)dy\\
&=\Gamma(\frac{s}{2})\Gamma(1-\frac{s}{2})z^{s-2}\zeta(s)=\frac{z^{s-2}\pi\zeta(s)}{\sin(\frac{\pi}{2}s)}. \end{aligned}\end{equation}
In the last line we invoked a well-known Mellin transform which was nicely discussed in Ivi\'c's paper [7], as well as the reflection formula for the gamma function. We require an integral evaluation from [5, pg.243, eq.(6.5)--(6.6), $z=0$],[9, eq.(11)], $c>1,$
\begin{equation}\mathfrak{K}(x)=\frac{1}{2\pi i}\int_{c-i\infty}^{c+i\infty}\frac{\zeta^2(1-s)x^{-s}}{2\cos(\frac{\pi}{2}s)}ds. \end{equation}
The integrand may be seen to have exponential decay from Stirling's formula for the gamma function once employing $\zeta(1-s)=2(2\pi)^{-s}\cos(\frac{\pi}{2}s)\Gamma(s)\zeta(s)$ [14, pg.16, eq.(2.1.8)]. Hence, we may move the line of integration to $0<d<1$ and compute the residue at the simple pole $s=1,$ to obtain for $0<d<1$
\begin{equation}\mathfrak{K}(x)=-\frac{1}{2x\pi}+\frac{1}{2\pi i}\int_{d-i\infty}^{d+i\infty}\frac{\zeta^2(1-s)x^{-s}}{2\cos(\frac{\pi}{2}s)}ds.\end{equation}
The residue is easily obtained by differentiating the denominator of the integrand when setting $s=1$ and using $\zeta(0)=-\frac{1}{2}.$ Now replacing $s$ by $1-s$ in this last integral and noting $\sin(\frac{\pi}{2}(1-s))=\cos(\frac{\pi}{2}s),$ we obtain
\begin{equation}x\mathfrak{K}(x)+\frac{1}{2\pi}=\frac{1}{2\pi i}\int_{d-i\infty}^{d+i\infty}\frac{\zeta^2(s)x^{s}}{2\sin(\frac{\pi}{2}s)}ds.\end{equation}
Through some properties of the absolutely convergent integral for $f(x),$ it can be seen that $f(0)=-\frac{1}{4z^2}.$ To see this we make a change of variables in our integral for $f(x)$ to eliminate the prefactor $x^{-2},$ invoke the theta function transformation [12, pg.120, eq.(4.1.12)], and take the limit as $x\rightarrow0$ in the result. The computation is,
\begin{align*} f(x)&=x^{-2}\int_{0}^{\infty}e^{-(zy)^2/x^2}y\left(\sum_{n\ge1}e^{-(yn)^2}-\frac{\sqrt{\pi}}{2y}\right)dy\\
&=x^{-2}\int_{0}^{\infty}e^{-(zy)^2/x^2}\left(\sqrt{\pi}\sum_{n\ge1}e^{-(\pi n/y)^2}-\frac{y}{2}\right)dy\\
&=x^{-2}\sqrt{\pi}\sum_{n\ge1}\int_{0}^{\infty}e^{-(zy)^2/x^2}e^{-(\pi n/y)^2}dy-x^{-2}\int_{0}^{\infty}e^{-(zy)^2/x^2}\frac{y}{2}dy\\
&=\frac{\sqrt{\pi}}{2x}\sum_{n\ge1}\int_{0}^{\infty}e^{-y^2}e^{-(\pi zn/y)^2}dy-x^{-2}\int_{0}^{\infty}e^{-(zy)^2/x^2}\frac{y}{2}dy\\
&=\frac{\pi}{2xz}\frac{1}{e^{2\pi z/x}-1}-\frac{1}{4z^2}.
\end{align*}
The last line follows from the integral evaluation [6, pg.146, eq.(27), $p=1$]. The first term clearly tends to $0$ as $x\rightarrow0.$ Now combining (2.2) with Theorem 1.1 gives the right side of Theorem 2.1 upon invoking (2.5). 
\end{proof}
\section{Rearranging Motohashi's formula}
An intriguing formula for computing integrals involving the mean square of the Riemann zeta function on the critical line was given by Motohashi in [11, Theorem 4.1] (see also Ivi\'c's paper [8]). It is stated as
$$\int_{-\infty}^{\infty}f(y)\left|\zeta\left(\frac{1}{2}+iy\right)\right|^2dy=\int_{-\infty}^{\infty}f(y)\left(\psi(\frac{1}{2}+iy)-i\frac{\pi}{2}\tanh(\pi y) \right)dy$$
\begin{equation}+2\pi\Re(f(\frac{i}{2}))+4\sum_{n\ge1}\sigma(n)\int_{0}^{\infty}(y(y+1))^{-1/2}\mathscr{F}(f)(\log(1+1/y))\cos(2\pi ny)dy.\end{equation}
The main difficulty in applying the formula appears to be in evaluating the last integral in the series on the right side of (3.1). We were able to adapt our method to recasting (3.1) in a different form. Define $h(y)=(y(y+1))^{-1/2}\mathscr{F}(f)(\log(1+1/y)).$ Notice that by Theorem 1.1 with $a(n)=\sigma(n),$ 
\begin{equation}\begin{aligned}&\sum_{n\ge1}\sigma(n)\int_{0}^{\infty}(y(y+1))^{-1/2}\mathscr{F}(f)(\log(1+1/y))\cos(2\pi ny)dy\\ &=\sum_{n\ge1}\sum_{m\ge1}\frac{1}{m}\int_{0}^{\infty}(y/m(y/m+1))^{-1/2}\mathscr{F}(f)(\log(1+m/y))\cos(2\pi ny)dy \\
&=\frac{1}{2}\sum_{m\ge1}\frac{1}{m}\left(\mathfrak{M}^{-1}\left(\zeta(s) \mathfrak{M}(h)(s)\right)(1/m)+\frac{h(0)}{2}\right)  \end{aligned}\end{equation}
assuming $h(0)$ exists. We may now write,
 \begin{equation}\begin{aligned}\mathfrak{M}(h)(s)&=\int_{0}^{\infty}y^{s-1}(y(y+1))^{-1/2}\int_{0}^{\infty}f(x)\cos(2\pi x\log(1+1/y))dxdy\\ 
&=\frac{1}{2}\int_{0}^{\infty}y^{s-1}(y(y+1))^{-1/2}\int_{0}^{\infty}f(x)((1+1/y)^{i2\pi x}+(1+1/y)^{-i2\pi x})dxdy\\
&=\frac{1}{2}\int_{0}^{\infty}f(x)\left(\frac{\Gamma(s-i2\pi x-\frac{1}{2})\Gamma(1-s)}{\Gamma(\frac{1}{2}-i2\pi x)}+\frac{\Gamma(s+i2\pi x-\frac{1}{2})\Gamma(1-s)}{\Gamma(\frac{1}{2}+i2\pi x)}\right)dx, \end{aligned}\end{equation}
for $\frac{1}{2}<\Re(s)<1,$ by [6, pg.310, eq.(19)]
\begin{equation}\int_{0}^{\infty}\frac{y^{s-1}}{(1+y)^{v}}dy=\frac{\Gamma(s)\Gamma(v-s)}{\Gamma(v)}, \end{equation}
for $0<\Re(s)<\Re(v).$ Parseval's formula [12, pg.83, eq.(3.1.11)] tells us that for $\Re(a-\frac{1}{2})<c<1,$
\begin{equation} \int_{0}^{\infty}\left(\sum_{n\ge1}e^{-nyx}n^{a-\frac{1}{2}}y^{a-\frac{1}{2}}-\frac{\Gamma(a+\frac{1}{2})}{xy}\right)e^{-y}dy=\frac{1}{2\pi i}\int_{c-i\infty}^{c+i\infty}x^{-s}\zeta(s)\Gamma(s+a-\frac{1}{2})\Gamma(1-s)ds.\end{equation}
Therefore, assuming absolute convergence from our choice of $f,$ taking the inverse Mellin transform of (3.3) gives us
\begin{equation}\mathfrak{M}^{-1}\left(\zeta(s) \mathfrak{M}(h)(s)\right)(1/m)=\frac{1}{2}\int_{0}^{\infty}f(x)(G(i2\pi x,1/m)+G(-i2\pi x,1/m))dx, \end{equation}
by (3.5) with $a=\pm i2\pi x,$ where
$$G(a,x):=\frac{1}{\Gamma(\frac{1}{2}-a)}\int_{0}^{\infty}\left(\sum_{n\ge1}e^{-nyx}n^{a-\frac{1}{2}}y^{a-\frac{1}{2}}-\frac{\Gamma(a+\frac{1}{2})}{xy}\right)e^{-y}dy.$$ We remark that due to Stirling's formula for the gamma function, the integrand of the last line in (3.3) behaves like $f$ for large $x.$ This allows for a large class of functions for which the interchange of integration is justified. Consequently, assuming absolute convergence, we have shown that the series on the far right hand side of (3.1) has the form
$$2\sum_{m\ge1}\frac{1}{m}\left(\int_{0}^{\infty}f(x)(G(i2\pi x,1/m)+G(-i2\pi x,1/m))dx +\frac{h(0)}{2}\right).$$

1390 Bumps River Rd. \\*
Centerville, MA
02632 \\*
USA \\*
ul. A. E. Ody\'{n}ca 47 \\*
02-606 Warsaw\\*
Poland\\*
E-mail: alexpatk@hotmail.com, alexepatkowski@gmail.com
\\*
Competing interests: The author declares none.
\\*
Funding statement: The author did not receive support for the submitted work.

\begin{thebibliography}{9}

\bibitem{ConcreteMath} B. C. Berndt, \emph{Identities involving the coefficients of a class of Dirichlet series. V,} Trans. Amer. Math. Soc. 160 (1971), 139--156. 

\bibitem{ConcreteMath} B. C. Berndt, \emph{An arithmetic Poisson formula,} Pacific J. Math. 103 (1982), 295--299.

\bibitem{ConcreteMath} B. C. Berndt, Ramanujan's Notebooks, Part II , Springer, 1989.

\bibitem{ConcreteMath} H. Davenport, \emph{On some infinite series involving arithmetic function,} Quarterly Journal of Mathematics, 8 (1937), pp. 8--13.

\bibitem{ConcreteMath} A. Dixit and V. H. Moll, \emph{Self-reciprocal functions, powers of the Riemann zeta function and modular-type
transformations,} J. Number Thy. 147 (2015), 211--249.

\bibitem{ConcreteMath} Erd\'elyi, A. (ed.), Tables of Integral Transforms, vol. 1, McGraw-Hill, New York, 1954. (1954)

\bibitem{ConcreteMath} A. Ivi\'c, \emph{Some identities of the Riemann zeta function II,} Facta Univ. Ser. Math. Inform. 20
(2005), 1–8.

\bibitem{ConcreteMath} A. Ivi\'c, On moments of $|\zeta(\frac{1}{2}+it)|^2$ in short intervals, Ramanujan Math. Soc. LNS2, The
Riemann zeta function and related themes: Papers in honour of Professor Ramachandra, 2006, 81--97.

\bibitem{ConcreteMath} N.S. Koshliakov, \emph{On an extension of some formulae of Ramanujan,} Proc. London Math. Soc., II. Ser. 41 (1936), 26--32.

\bibitem{ConcreteMath} H. Lima, \emph{On M$\ddot{u}$ntz-type formulas related to the Riemann zeta function,} J. Math. Anal. Appl. 463 (2018) 398--411.

\bibitem{ConcreteMath} Y. Motohashi, Spectral theory of the Riemann zeta-function, Cambridge University Press, Cambridge, 1997.

\bibitem{ConcreteMath} R. B. Paris, D. Kaminski, Asymptotics and Mellin--Barnes Integrals. Cambridge University Press. (2001)


\bibitem{ConcreteMath} S. Segal, \emph{On an identity between infinite series of arithmetic functions,} Acta Arithmetica, 28.4 (1976): 345--348

\bibitem{ConcereteMath} E. C. Titchmarsh, The theory of the Riemann zeta function, Oxford University Press,
2nd edition, 1986.



\end{thebibliography}
\end{document}